\numberwithin{equation}{section}
\newtheorem{lemma}{Lemma}[section]
\newtheorem{theorem}{Theorem}[section]
\begin{document}

\title[Parabolic Equation with Nonlocal Diffusion]%
{Parabolic Equation\\ with Nonlocal Diffusion\\
without a Smooth Inertial Manifold}
\author{Alexander V. Romanov}
\address{National Research University Higher School of Economics}
\email{av.romanov@hse.ru}

\subjclass[2010]{Primary 35B42; Secondary 35B40, 35K68}

\keywords{inertial manifold, semilinear parabolic equation,
nonlocal diffusion}

\maketitle

\begin{abstract}
We construct an example of a one-dimensional parabolic
integro-differential equation  with nonlocal diffusion which does
not have asymptotically finite-dimensional dynamics in the
corresponding state space. This example is more natural in the
class of evolutionary equations of parabolic type than those known
earlier.
\end{abstract}

\section*{Introduction}

The theory of inertial manifolds is the most extreme implementation
of the concept, going back to Hopf~\cite{1}, of finite-dimensional
large-time behavior of solutions of distributed evolution systems
with dissipation. The concept implies that the eventual dynamics of
a dissipative system with infinitely many degrees of freedom can in
a sense be controlled by finitely many parameters. It turns out
then that the main object of study is a class of semilinear
parabolic equations with Hilbert state space. Paradoxically, even
though the existence of an inertial manifold has so far been
established only for a narrow class of such problems, known
examples in which the nonexistence of such a manifold is guaranteed
have been difficult to construct and look rather artificial. In any
case, no examples of this kind have been known yet for real
problems of mathematical physics. The present paper is a step in
this direction. Namely, we present a family of integro-differential
equations of parabolic type with nonlocal diffusion on the circle
such that these equations do not have a smooth inertial manifold.

\section{Preliminaries}

We consider evolution equations of the form
\begin{equation}\label{(1.1)}
  \partial _{t} u=-Au+F(u)
\end{equation}
with linear part~$-A$ and nonlinear part~$F$ in a real separable
Hilbert space~$(X,\|\cdot\|)$. The general theory of such
equations can be found in the book~\cite{2}. A closed linear
operator~$A$ on~$X$ with dense domain~$D(A)$ is said to be
\textit{sectorial} if it generates an analytic semigroup
$\{e^{-At}\} _{t>0} $; in this case, the spectrum $\sigma (A)$
lies in some half-plane $\operatorname{Re}\lambda>\delta$. The
property of being sectorial is stable under bounded perturbations.
Assuming without loss of generality that $\delta>0$, we denote the
one-sided scale of Hilbert spaces corresponding to~$A$ by
$\{X^{\alpha}\}_{\alpha\ge0} $, where $X^{\alpha}=D(A^{\alpha})$
and $\|u\|_{\alpha}=\|A^{\alpha}u\|$ for $u\in X^{\alpha}$; then
$X^{0}=X$, $X^{1} =D(A)$, and $X^{\beta}\subset X^{\alpha}$ for
$\beta>\alpha $.

From now on we make the following main assumptions
about~\eqref{(1.1)}.

(H1) The linear operator~$A$ is sectorial, its resolvent is
compact, and the spectrum~$\sigma(A)$ lies in the half-plane
$\operatorname{Re} \lambda>\delta>0$.

(H2) For some $\theta\in[0,1)$, the nonlinear function $F$
maps~$X^{\theta}$ into~$X$ and satisfies the estimate
\begin{equation*}
  \|F(u)-F(v)\| \leq L(r)\|u-v\|_{\theta}
\end{equation*}
for $\|u\|_\theta\le r$ and $\|v\|_\theta\le r$.

(H3) Equation~\eqref{(1.1)} generates a continuous dissipative
semiflow $\{\Phi_t\}_{t\ge0}$ in the space~$X^\theta$.

The dissipativity of the solution semiflow means that
\begin{equation*}
\sup \lim_{t\to +\infty} \|\Phi_{t}u\|_\theta \le a
\end{equation*}
uniformly with respect to $u$ on bounded subsets of~$X^\theta$. We
denote the closed ball $(\|u\|_\theta\le r)$ in the state
space~$X^\theta$ by~$B_{r} $ and say that the ball~$B_{a}$ is
\textit{absorbing}. The number~$\theta$ will be called the
\textit{nonlinearity exponent} of Eq.~\eqref{(1.1)}. The embeddings
$X^{\beta}\subset X^{\alpha}$, $0\leq\alpha<\beta$, are dense and
compact; in particular, ${\|u\|_{\alpha}\leq
C(\alpha,\beta)\|u\|_{\beta}}$ for $u\in X^{\beta}$. Under
conditions (H1)--(H3), one can readily establish (using the
constructions in~\cite[Theorem~3.3.6]{2}) that the evolution
operators~$\Phi_{t}$ are compact for~$t>0$.

In real problems, the operator~$A$ often proves to be self-adjoint,
and the compactness of the resolvent of~$A$ is typical of the case
of parabolic partial differential equations in bounded
domains~$\Omega \subset {\mathbb R}^{m} $.

A set $U\subset X^{\theta } $ is said to be \textit{invariant}
 if $\Phi_tU=U$ for $t>0$. The \textit{global attractor}
$\EuScript A$ of a semiflow $\{\Phi_{t}\} _{t\ge 0} $ is
defined~\cite{3,4} as the union of all entire (existing for $t\in
(-\infty,+\infty )$) bounded trajectories of the
infinite-dimensional dynamical system~\eqref{(1.1)} in the state
space~$X^{\theta}$. The global attractor (called simply the
attractor in what follows) is a connected compact (by virtue of the
compactness of the evolution operators $\Phi_{t}$) invariant set
in~$X^{\theta}$ and uniformly attracts the balls in the
space~$X^{\theta}$ as $t\to +\infty $. In particular, $\EuScript A$
contains all possible limit modes (equilibrium points, cycles,
invariant tori, etc.)\ of the solution semiflow. By the
\textit{smoothing property} of the parabolic equation, one has
$\Phi_{t}X^{\theta}\subset X^{1}$ for $t>0$, and hence every
invariant set (in particular, the attractor) lies in~$X^{1}$.

Let us modify the function $F(u)$ (without losing Lipschitz or
$C^{k}$-regularity, $1\le k\le \infty$) outside the absorbing
ball~$B_{a}$ in such a way that the new function~$\tilde{F}(u)$ be
identically zero outside the ball~$B_{a+1}$. This ``truncation''
procedure, described in detail in~\cite{4}, permits one to proceed
to the equation $u_{t} =-Au+\tilde{F}(u)$ with globally Lipschitz
function $\tilde{F}(u)$ and dissipative phase semiflow
in~$X^{\theta}$; this equation inherits the eventual dynamics
\eqref{(1.1)}. We assume that all this has already been done,
return to the original notation $F(u)$, and use the condition
\begin{equation}\label{(1.2)}
 \|F(u)-F(v)\| \leq  L\|u-v\|_{\theta}
\end{equation}
on the nonlinear component of Eq.~\eqref{(1.1)} in what follows.
Note~\cite{2} that the phase semiflow $\{\Phi_{t}\}$ inherits the
smoothness of the function $F(u)$.

Conditions (H1) and (H2) ensure the solvability of
Eq.~\eqref{(1.1)} in $X^{\theta}$ locally in $t>0$. The
dissipativity of the corresponding  dynamical system is a
technical but important fact.

\begin{lemma}\label{1.1}
If assumptions~\textup{(H1)} and~\textup{(H2)} hold and
$\|F(u)\|\le M$ for $u\in X^{\theta}$, then Eq.~\eqref{(1.1)} is
dissipative in~$X^{\theta}$.
\end{lemma}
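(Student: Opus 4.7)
The plan is to use the variation of constants (mild solution) representation together with the analytic-semigroup smoothing estimates on the scale $\{X^\alpha\}$. Given $u_0\in X^\theta$, as long as the local-in-time solution (guaranteed by (H1)--(H2)) exists, it satisfies
\begin{equation*}
u(t)=e^{-At}u_0+\int_0^t e^{-A(t-s)}F(u(s))\,ds.
\end{equation*}
I would apply $A^\theta$ and estimate the two terms separately, making essential use of the uniform bound $\|F\|\le M$, which is the only new hypothesis of the lemma.

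For the linear term, since $A$ is sectorial with $\sigma(A)\subset\{\operatorname{Re}\lambda>\delta>0\}$, the standard estimates from \cite[Ch.~1]{2} yield
\begin{equation*}
\|e^{-At}u_0\|_\theta=\|A^\theta e^{-At}u_0\|\le C_1 e^{-\delta t}\|u_0\|_\theta,\qquad t\ge0.
\end{equation*}
For the nonlinear term, the key smoothing bound $\|A^\theta e^{-A\tau}\|_{X\to X}\le C_\theta\,\tau^{-\theta}e^{-\delta\tau}$ together with $\|F(u(s))\|\le M$ gives
\begin{equation*}
\left\|\int_0^t A^\theta e^{-A(t-s)}F(u(s))\,ds\right\|\le C_\theta M\int_0^t\tau^{-\theta}e^{-\delta\tau}\,d\tau\le K,
\end{equation*}
where $K:=C_\theta M\,\delta^{\theta-1}\Gamma(1-\theta)<\infty$; the finiteness of this integral is precisely where the assumption $\theta<1$ enters. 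Combining the two estimates yields the a priori bound
\begin{equation*}
\|u(t)\|_\theta\le C_1 e^{-\delta t}\|u_0\|_\theta+K.
\end{equation*}

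This inequality does both jobs at once. First, it rules out finite-time blow-up in the $X^\theta$-norm, so the local solution can be continued for all $t\ge0$ by the standard continuation argument, yielding a globally defined semiflow $\{\Phi_t\}$ on $X^\theta$. Second, letting $t\to+\infty$ gives
\begin{equation*}
\limsup_{t\to+\infty}\|\Phi_t u_0\|_\theta\le K
\end{equation*}
uniformly on every ball $B_r\subset X^\theta$, which is exactly the dissipativity condition stated in the paper, with absorbing radius $a=K$.

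I do not expect any substantial obstacle here: once the analytic-semigroup estimates are invoked, the whole argument is a one-line Gronwall-free estimate. The only point requiring care is the interplay between the a priori bound and the continuation argument, and the pivotal analytic fact is the integrability of $\tau^{-\theta}e^{-\delta\tau}$ at $\tau=0$, which fails exactly at $\theta=1$ and is therefore the reason the nonlinearity exponent in (H2) must lie strictly below $1$.
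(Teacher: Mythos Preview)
Your proposal is correct and follows essentially the same route as the paper's own proof: Duhamel's formula, the two analytic-semigroup estimates $\|e^{-At}\|_{X^\theta\to X^\theta}\le Ce^{-\delta t}$ and $\|A^\theta e^{-At}\|_{X\to X}\le Ct^{-\theta}e^{-\delta t}$, the uniform bound $\|F\|\le M$, and the resulting a priori estimate which simultaneously yields global continuation and an absorbing ball of radius $CM\int_0^\infty s^{-\theta}e^{-\delta s}\,ds$. Your write-up is in fact slightly more detailed than the paper's (you evaluate the integral as $\delta^{\theta-1}\Gamma(1-\theta)$ and make explicit why $\theta<1$ is essential), but the argument is the same.
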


\begin{proof}
Let us rewrite \eqref{(1.1)} in the form of the Duhamel integral
equation
\begin{equation*}
 u(t)=e^{-At} u(0)+\int_{0}^{t}e^{-A(t-\tau )}  F(u(\tau ))d\tau .
\end{equation*}
Since $\operatorname{Re} \sigma (A)>\delta >0$, it follows from
the well-known estimates $\|e^{-At}u\|_\theta\le Ce^{-\delta t}
\|u\|_\theta$ and $\|e^{-At}u\|_\theta\le Ct^{-\theta} e^{-\delta
t}\|u\|$ for arbitrary $u\in X^{\theta } $ that
\begin{equation*}
\| u(t)\| _{\theta }\le C e^{-\delta t}\| u(0)\| _{\theta } + C M
\int_{0}^{t}e^{-\delta (t-\tau )}  (t-\tau )^{-\theta } d\tau .
\end{equation*}
 We see that the norm
$\left\|u(t) \right\|_{\theta} $ remains bounded on the existence
interval of the solution~$u(t)$, whence it follows
\cite[Theorem~3.3.4]{2} that the solution can be extended
to~$[0,\infty)$. Thus, the original equation has the absorbing ball
$B_{a}\subset X^{\theta} $ of radius
\begin{equation*}
a = CM \int _{0}^{\infty }e^{-\delta s} s^{-\theta } ds,
\end{equation*}
and the proof of the lemma is complete.
\end{proof}

\section{Inertial Manifolds}

We will consider semilinear parabolic equations of the
form~\eqref{(1.1)} with self-adjoint linear operator~$A$, nonlinear
function $F\in C^{1}(X^{\theta},X)$, $0\leq \theta <1$, and
solution semiflow $\{ \Phi _{t} \} _{t\ge 0}$ in the state space
$X^{\theta}$. An \textit{inertial manifold} is a smooth or
Lipschitz finite-dimensional invariant surface $\EuScript M \subset
X^{\theta}$ containing the attractor $\EuScript A$ and
exponentially attracting all solutions $u(t)$ at large times.

Most of the known methods (starting from the fundamental
papers~\cite{5,6}) for constructing an $n$-dimensional inertial
manifold require the spectral jump condition
\begin{equation}\label{(2.1)}
 \lambda _{n+1} - \lambda _{n} > kL(\lambda _{n+1}^{\theta}
 + \lambda _{n}^{\theta } ),
\end{equation}
where $L$~is the constant in inequality~\eqref{(1.2)},
the~$\lambda _{n}$ are the eigenvalues of~$A$ arranged in
non-decreasing order (with regard for multiplicities), and $k$~is
some absolute constant.

It is well known \cite{7,8} that for ${\EuScript M}\in
\operatorname{Lip}$ one can take $k=1$, and this value is sharp
\cite{8}. The construction of a~$C^{1}$-smooth inertial manifold
usually assumes slightly larger values of~$k$, but there are
reasons to believe (see~\cite[p.~17]{9}) that $k=1$ is the optimal
constant in this case as well.

It was shown in~\cite{8} that the estimate~\eqref{(2.1)} with $k=1$
permits one to construct an $n$-dimensional Lipschitz inertial
manifold of Eq.~\eqref{(1.1)} in the form
\begin{equation*}
{\EuScript M} = \{ u \in  X^{\theta }\colon u  = y + h(y),\quad
y\in P_{n} X^{\theta }\},
\end{equation*}
where $P_{n}$ is the spectral projection of~$A$ corresponding to
the part $\{ \lambda _{1} \le \lambda _{2} \le \dotsm\le \lambda
_{n} \} $ of the spectrum, $h\colon P_{n} X^{\theta } \to (I-P_{n}
)X^{\theta} $ with $I=\operatorname{id}$, and moreover,
\begin{equation*}
 \|h(y)  -  h(y')\| _{\theta } \le d  \| y - y' \| _{\theta }
\end{equation*}
for $y,y'\in P_{n} X^{\theta } $. In this case, to each $u \in
X^{\theta} $ there corresponds a $\bar{u} \in {\EuScript M}$ such
that
\begin{equation*}
\| \Phi_{t}u - \Phi_{t}\bar{u} \| _{\theta } \le C \| u - \bar{u}\|
_{\theta} e^{-\gamma t}
\end{equation*}
for $t>0$ with $ \gamma =\lambda _{n+1} -\lambda _{n+1}^{\theta} K
>0$. The constant $C$ is independent of~$u$ and~$\bar{u} $. Since
the manifold ${\EuScript M}$ is invariant, it follows that
${\EuScript M}\subset X^{1} $.

The limit dynamics of the dynamical system $\{ \Phi _{t} \} _{t\ge
0} $ with state space $X^{\theta}$ is completely described by the
\textit{inertial form}
\begin{equation*}
   y_{t} = -Ay + P_{n} F(y + h(y)),\qquad y\in P_{n} X^{\theta },
\end{equation*}
which is an ordinary differential equation in $P_{n} X^{\theta }
\simeq {\mathbb R}^{n} $. In this case, one says that the original
equation~\eqref{(1.1)} is \textit{asymptotically $n$-dimensional}.

By using the spectral jump condition \eqref{(2.1)}, one can
establish the existence of an inertial manifold for a dissipative
equation~\eqref{(1.1)} with given linear part~$A$ and arbitrary
nonlinear function~$F$ satisfying the Lipschitz
condition~\eqref{(1.2)} under the ``spectrum sparseness''
assumption
\begin{equation}\label{(2.2)}
\sup_{n\ge 1} \frac{\lambda _{n+1} -\lambda _{n} }{\lambda
_{n+1}^{\theta }
 +\lambda _{n}^{\theta } } = \infty .
\end{equation}
If the linear part $-A$ of the parabolic equation~\eqref{(1.1)} is
the Laplace operator~$\Delta $ with standard boundary conditions in
$L^{2} (\Omega )$, $\Omega \subseteq {\mathbb R}^{m} $, then these
conditions become restrictive owing to the well-known asymptotics
$\lambda _{n} \sim c n^{2/m} $ of the eigenvalues $\lambda _{n} \in
\sigma (-\Delta )$. Attempts to sidestep condition~\eqref{(2.2)}
have only been successful in isolated special cases (e.g.,
see~\cite{10,11}). By now, the asymptotic finite-dimensionality has
not been established even for relatively simple problems such as
the parabolic equation
\begin{equation*}
 u_{t} = u_{xx}+f(x,u,u_{x})
\end{equation*}
on the circle or the reaction--diffusion equation
\begin{equation*}
 u_{t} = u_{xx}+f(x,u)
\end{equation*}
with standard boundary conditions in the disk.

On the other hand, extremely little is known about examples of
nonexistence of an inertial manifold for evolution equations
\eqref{(1.1)}. A system of two coupled one-dimensional parabolic
pseudodifferential equations without a smooth inertial manifold
was constructed in \cite{12} on the basis of the following
argument. Let $F'(u)$ be the Fr\'echet derivative of the smooth
mapping $F\colon X^{\theta} \to X$ at a point $u\in X^{\theta} $.
The linear operators $F'(u)$ are continuous from $X^{\theta}$ to
$X$ (i.e., $F'(u)\in \operatorname{End}(X^{\theta},X)$), and the
Lipschitz condition~\eqref{(1.2)} is equivalent to the estimate
$\|F'(u)\|_{\operatorname{op}}\leq L$ for $u\in X^{\theta}$. Let
$\sigma (T{(u}))$ be the spectrum of the unbounded linear operator
$T(u)=F'(u)-A$ in~$X$ with domain~$X^{1}$. Since $F'(u)=F'(u)A^{-
\theta}A^{\theta}$ with $\theta<1$, where $F'(u)A^{- \theta}\in
\operatorname{End}X$, it follows by~\cite[Sec.~1.4]{2} that the
operator $-T(u)$ inherits sectoriality, closedness, and the
compact resolvent property from the operator~$A$. Thus, $\sigma
(T(u))$ consists of eigenvalues of finite multiplicity. The number
$l(u)$ of positive eigenvalues in~$\sigma (T(u)) $ (counting
algebraic multiplicities) is finite. Finally, let $E$~be the set
of hyperbolic stationary points $u\in X^{\theta}$ of
Eq.~\eqref{(1.1)} for which the spectrum~$\sigma (T(u))$ does not
contain negative eigenvalues.

\begin{lemma}[\cite{12}]\label{2.1}
If the attractor~$\EuScript A$ of Eq.~\eqref{(1.1)} with nonlinear
function $F\in C^{1}(X^{\theta},X)$ is contained in some smooth
invariant finite-dimensional manifold ${\EuScript M}\subset
X^{\theta } $, then the number $l(u_{1} )- l(u_{2} )$ is even for
any $u_{1},u_{2}\in E$.
\end{lemma}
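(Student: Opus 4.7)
The plan is to fix $n=\dim\EuScript M$ and show that $l(u)\equiv n\pmod 2$ for every $u\in E$; the conclusion that $l(u_1)-l(u_2)$ is even then follows at once. The argument is essentially a finite-dimensional spectral count once the geometry of $\EuScript M$ near a stationary point is transported into the spectral structure of $T(u)=F'(u)-A$.

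First I would linearize at a given $u\in E$. Since $\EuScript M$ is a smooth $n$-dimensional submanifold of $X^{\theta}$ invariant under the semiflow $\{\Phi_t\}$, its tangent space $T_u\EuScript M$ is an $n$-dimensional subspace of $X^{\theta}$ that is stable under the linearization $d\Phi_t(u)=e^{T(u)t}$, and hence stable under $T(u)$ itself. The next step is to argue that $T_u\EuScript M$ contains the full unstable subspace $U_u$ of $T(u)$, whose dimension is $l(u)$. The unstable manifold $W^u(u)$ consists of entire bounded trajectories approaching $u$ backward in time, so $W^u(u)\subset\EuScript A\subset\EuScript M$ by the definition of the attractor. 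Because $\EuScript M$ is $C^1$ and $W^u(u)$ is a $C^1$-submanifold of $X^{\theta}$ tangent to $U_u$ at $u$, every curve on $W^u(u)$ issuing from $u$ is also a curve on $\EuScript M$, and hence $U_u\subset T_u\EuScript M$. Using the spectral projection of $T(u)$ onto $U_u$ one may then split
\begin{equation*}
 T_u\EuScript M = U_u\oplus W,
\end{equation*}
where $W$ is $T(u)$-invariant, lies in the stable subspace $S_u$ of $T(u)$, and has dimension $n-l(u)$.

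The proof is completed by a parity count on $W$. Every eigenvalue of $T(u)|_W$ lies in $\sigma(T(u))\cap\{\operatorname{Re}\lambda<0\}$; by the definition of $E$, that set contains no real numbers, so each such eigenvalue is non-real and must therefore occur in a complex-conjugate pair for the real operator $T(u)|_W$. Hence $\dim W = n-l(u)$ is even, i.e.\ $l(u)\equiv n\pmod 2$. Since $n$ does not depend on the particular $u\in E$, we obtain $l(u_1)\equiv l(u_2)\pmod 2$, as claimed.

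The delicate step is the containment $U_u\subset T_u\EuScript M$. Its justification rests on $W^u(u)$ being a genuine $C^1$-submanifold tangent to $U_u$ at $u$, which in turn uses both the $C^1$-regularity of $F$ (to run the Lyapunov--Perron construction for the sectorial operator $T(u)$) and the $C^1$-regularity of $\EuScript M$ itself---this is precisely where the hypothesis of a smooth, rather than merely Lipschitz, invariant manifold is needed. The remainder is the elementary observation that a real finite-dimensional operator whose spectrum lies entirely in the open left half-plane and contains no negative real numbers must act on an even-dimensional space.
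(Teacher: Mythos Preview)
The paper does not prove Lemma~\ref{2.1}; it is merely quoted from~\cite{12}. Your argument is the standard one and is correct: the tangent space $T_u\EuScript M$ is a finite-dimensional subspace invariant under the linearized semigroup, it contains the unstable space because the local unstable manifold $W^u(u)$ lies in $\EuScript A\subset\EuScript M$ and is tangent to $U_u$, and the parity count on the complementary piece $W$ does the rest.

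Two small points are worth tightening. First, you write that $\dim U_u=l(u)$; this reads $l(u)$ as the number of eigenvalues with positive real part. The paper's phrase ``positive eigenvalues'' could also be read literally (positive \emph{real} eigenvalues). Either reading gives the same conclusion, since non-real eigenvalues of the real operator $T(u)$ occur in conjugate pairs, so $\dim U_u-l(u)$ is even in the second reading and your parity argument goes through unchanged. Second, the passage from ``$T_u\EuScript M$ is invariant under $e^{T(u)t}$'' to ``invariant under $T(u)$ itself'' deserves one line: a finite-dimensional subspace invariant under an analytic semigroup automatically lies in the domain of the generator and is invariant under it (differentiate $t\mapsto e^{T(u)t}v$ inside the finite-dimensional space). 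With these clarifications your proof is complete and matches what one finds in~\cite{12}.
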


This fact was used in the recent papers~\cite{9,13} to obtain a
general construction of an abstract equation~\eqref{(1.1)} with
nonlinear function $F\in C^{\infty} $ and nonlinearity exponent
$\theta =0$ without a smooth inertial manifold. In the same
papers, a different (more delicate) argument was used to construct
an equation of the form~\eqref{(1.1)} with $F\in C^{\infty}$ not
admitting even a Lipschitz inertial manifold. The corresponding
results can apparently be extended to the general case of the
nonlinearity exponent $\theta \in [0,1)$. The counterexamples
in~\cite{9,12,13} are not sufficiently natural; it would be
desirable to present some physically meaningful semilinear
parabolic equation lacking asymptotic finite-dimensionality. To
some extent, this problem is solved in what follows. Note that the
corresponding example was announced by the author as early as
in~\cite{14}.

\section{Main Result}

 By $H^{\nu }$, $\nu \ge 0$, we denote the
generalized $L^{2}$ Sobolev spaces~\cite{15} of real functions on
the unit circle~$\Gamma $; in particular, $H^{0} =  H =L^{2}
(\Gamma)$. The differentiation operator $\partial _{x} u=u_{x} $ is
continuous from $H^{\nu +1}$ to $H^{\nu}$, and moreover, $\partial
_{x}\colon H^{1} \rightarrow H_{0}$, where $H_{0}\subset L^{2}
(\Gamma )$ is the subspace of functions with zero mean over~$\Gamma
$. For $\nu>1/2$, there are continuous embeddings $H^{\nu} \subset
C(\Gamma )$ and $H^{\nu +1} \subset C^{1}(\Gamma )$.

Consider the integro-differential parabolic equation
\begin{equation}\label{(3.1)}
 u_{t} =  ((I +  B) u_{x} )_{x} + f(x,u,u_{x} )
 +Ku,
\end{equation}
where $x \in\Gamma $. The bounded linear operators $K$, $
I=\operatorname{id}$, and $B=B^{*} $ act on the Hilbert space~$H$
with norm $\| \cdot \| $, and the function $f(x,s,p)$ defined on
$\Gamma \times {\mathbb R}^{2}$ is assumed to be infinitely smooth
but nonanalytic. Here the operator $I+ B$ plays the role of a
nonlocal diffusion coefficient, and the term $Ku$ can be
interpreted as a nonlocal source. More precisely, set
\begin{equation*}
 (Bh)(x) = \frac{1}{\pi } \int_{-\pi}^{\pi }
 \ln \left| \sin \frac{x+y}{2} \right| h(y)\, dy
\end{equation*}
and in addition
\begin{equation*}
(Jh)(x) = \frac{1}{2\pi }  \int_{-\pi}^{\pi } {\rm ctg}\,
\frac{x+y}{2} \, h(y)\, dy
\end{equation*}
for $h\in H$. The operator~$J$ is related to the Hilbert singular
integral operator
\begin{equation*}
({\EuScript G}h)(x) = \frac{1}{2\pi}\int _{-\pi}^{\pi} {\rm ctg}\,
\frac{y-x}{2} \, h(y)\, dy
\end{equation*}
by the formula $(Jh)(x)=({\EuScript G}h)(-x)$, $h\in H$. It is
well known~\cite[Chap.~6]{16}  that ${\EuScript G}1=0$ and
\begin{equation*}
 {\EuScript G}\colon \cos nx \rightarrow - \sin nx, \qquad
 {\EuScript G}\colon \sin nx \rightarrow  \cos nx
\end{equation*}
for integer $n\geq1$; hence $J1=0$ and
\begin{equation}\label{(3.2)}
 J\colon  \cos nx\to \sin nx, \qquad  J\colon \sin  nx\to\cos nx
\end{equation}
for such $n$.

The integral operators $J$ and~$B$ have the following properties:

(a)  $J\in \operatorname{End}H$ and $J^{2} =I$ on $H_{0}$;

(b) $B\in\operatorname{End}( H,H^{1})$ and $\partial _{x} B=J$ on
$H$.

\noindent Clearly, $J^{*} =J$, $B^{*} =B$, the operator~$B$ is
compact in~$H$, and
\begin{equation*}
 B\colon\cos
 nx\to -\frac{1}{n} \cos nx, \qquad   B\colon\sin nx\to
 \frac{1}{n} \sin nx
\end{equation*}
for~$n\geq1$. We see that the subspace $H_{0}$ is invariant
under~$B$, and the minimum eigenvalue of the restriction of~$B$
to~$H_{0}$ is~$-1$. Thus, the self-adjoint operator $I+B$ is
nonnegative on~$H_{0}$ and can be interpreted as a degenerate
nonlocal ``diffusion coefficient'' in the evolution
equation~\eqref{(3.1)}.

To represent Eq.~\eqref{(3.1)} in the standard form~\eqref{(1.1)},
we take $Au=u-u_{xx}$ with $D(A)=H^{2}$ and
\begin{equation}\label{(3.3)}
 F(u)=u+(Bu_{x})_{x}+f(x,u,u_{x})+Ku.
\end{equation}
Set $X=H$ and $X^{\alpha}=D(A^{\alpha})$ for $\alpha>0$. The
self-adjoint positive linear operator~$A$ on~$X$ has compact
resolvent, and $X^{\alpha} =H^{2\alpha}$. Note that
$\lambda_{n}\sim cn^{2}$ for the eigenvalues of~$A$, and hence the
spectrum sparseness condition~\eqref{(2.2)} does not hold even for
$\theta=1/2$, which is the minimum possible value of the
nonlinearity exponent in this situation.

\begin{theorem}\label{3.1}
For an appropriate choice of a function $f(x,s,p)\in C^{\infty}$
and a compact integral operator~$K$ with $C^{\infty}$-kernel,
Eq.~\eqref{(3.1)} generates a smooth dissipative semiflow
in~$X^{\theta }$, $\theta \in (3/4,1)$. The attractor of this
equation is not contained in any invariant finite-dimensional
$C^{1}$-manifold ${\EuScript M\subset X^{\theta}}$.
\end{theorem}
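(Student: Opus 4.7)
The strategy is to apply Lemma~\ref{2.1}. Namely, we will select the data $f\in C^{\infty}(\Gamma\times\mathbb{R}^{2})$ and the compact operator $K$ (with smooth kernel) so that Eq.~\eqref{(3.1)} has two hyperbolic stationary points $u_{1},u_{2}\in E$ for which $l(u_{1})-l(u_{2})$ is odd; since $\EuScript A$ contains every equilibrium, Lemma~\ref{2.1} then forbids $\EuScript A$ from sitting inside any $C^{1}$ invariant finite-dimensional manifold.

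Preliminary verification of (H1)--(H3). With $Au=u-u_{xx}$, $D(A)=H^{2}$, the operator $A$ is positive self-adjoint with compact resolvent and $X^{\alpha}=H^{2\alpha}$; this gives (H1). For $\theta\in(3/4,1)$, the embedding $X^{\theta}=H^{2\theta}\hookrightarrow C^{1}(\Gamma)$ makes the Nemytskii map $u\mapsto f(\cdot,u,u_{x})$ smooth from $X^{\theta}$ into $C(\Gamma)\subset X$; combined with $u$, with $(Bu_{x})_{x}=Ju_{x}$ (property~(b)), and with $Ku$, this yields $F\in C^{\infty}(X^{\theta},X)$, hence (H2). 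For (H3) one truncates $f$ outside a prospective absorbing ball as in Section~1 and derives an energy estimate using the non-negativity of $I+B$ on $H_{0}$ to control $\|u(t)\|_{H^{1}}$, after which parabolic smoothing upgrades the bound to $X^{\theta}$; the attractor thus lies in $X^{1}$.

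The core structural observation concerns the spectrum of the Fr\'echet derivative. Writing $T(u)v=v_{xx}+Jv_{x}+f_{s}(\cdot,u,u_{x})v+f_{p}(\cdot,u,u_{x})v_{x}+Kv$, the self-adjoint leading part $v\mapsto v_{xx}+Jv_{x}$ is diagonalized by~\eqref{(3.2)}: it has eigenvalue $0$ on $\operatorname{span}\{1,\sin x\}$ and eigenvalue $-n(n+1)$ on $\operatorname{span}\{\cos nx,\sin(n+1)x\}$ for each $n\ge 1$, so every eigenvalue is doubly degenerate. A non-self-adjoint compact perturbation acting on each degenerate pair with opposite-signed off-diagonal entries splits that level into a complex conjugate doublet $-n(n+1)\pm i\beta_{n}$ with $\beta_{n}>0$, pushing it off the real axis. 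This coupling can be realized explicitly by an integral kernel of the schematic form $k(x,y)=\sum_{n\ge 1}\epsilon_{n}\sin((n+1)x-ny)$ with $\{\epsilon_{n}\}$ decaying rapidly enough to keep $k\in C^{\infty}$; a suitable constant shift in $f_{s}$ then moves the finitely many surviving real eigenvalues to the positive real axis, placing the equilibrium in $E$.

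Exploiting this flexibility, arrange $f(x,0,0)=0$ so that $u_{1}=0$ is an equilibrium, and use the remaining freedom in $f$ and $K$ to force a second explicit $C^{\infty}$ stationary solution $u_{2}\neq 0$ of $u_{xx}+Ju_{x}+f(x,u,u_{x})+Ku=0$. A final small perturbation ensures that both $u_{1}$ and $u_{2}$ are hyperbolic points of $E$ and that exactly one of the surviving real positive eigenvalues of $T(u)$ crosses zero along the deformation from $u_{1}$ to $u_{2}$, making $l(u_{1})-l(u_{2})$ odd; Lemma~\ref{2.1} then delivers the theorem. The principal obstacle is this simultaneous control: producing one smooth pair $(f,K)$ which (a)~splits the \emph{entire} infinite family of doubly degenerate eigenvalues of $T(u_{i})$ into non-real complex pairs at \emph{both} equilibria, and (b)~enforces an odd parity difference between $l(u_{1})$ and $l(u_{2})$. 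The doubly degenerate spectrum of $v_{xx}+Jv_{x}$, inherited from the cotangent kernel of the nonlocal diffusion operator $I+B$, is precisely the feature that makes this construction feasible and more natural than the abstract examples in~\cite{9,12,13}.
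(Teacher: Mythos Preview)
Your overall strategy coincides with the paper's: apply Lemma~\ref{2.1} after producing two hyperbolic equilibria $u_0,u_1\in E$ with $l(u_0)-l(u_1)$ odd, and you correctly identify the doubly degenerate spectrum of $Q=\partial_{xx}+J\partial_x$ on the pairs $\{\cos nx,\sin(n+1)x\}$ as the structural feature that the antisymmetric compact operator $K$ can exploit to split every level into a nonreal complex conjugate pair. This is exactly how the paper obtains $l(u_0)=0$ at $u_0=0$: with $K$ as in~\eqref{(3.4)}, $T(u_0)=Q+K$ is block $2\times2$ on each $X_n$ with matrix $\bigl(\begin{smallmatrix}-n^2-n & -\varepsilon_n\\ \varepsilon_n & -n^2-n\end{smallmatrix}\bigr)$, so $\sigma(T(u_0))\cap\mathbb R=\emptyset$.

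The genuine gap is in your treatment of the second equilibrium. The phrases ``a suitable constant shift in $f_s$ moves the finitely many surviving real eigenvalues'' and ``a final small perturbation ensures \dots exactly one \dots crosses zero'' do not describe a workable mechanism: if $K$ has already pushed \emph{every} level off the real axis, there are no surviving real eigenvalues to shift, and a constant shift in $f_s$ translates the whole spectrum rigidly, creating no parity change. The paper's actual device is quite different and is the heart of the construction. One arranges $f$ so that $u_1\equiv 1$ is also an equilibrium and so that $f_p(x,1,0)=\kappa$ with $|\kappa|>1$; this inserts the first-order term $\kappa\partial_x$ into $T(u_1)$. The operator $Q_\kappa=Q+\kappa\partial_x$ no longer respects the $X_n$ decomposition but instead leaves the subspaces $Y_n=\{\cos nx,\sin nx\}$ invariant, with eigenvalues $-n^2\pm i(\kappa^2-1)^{1/2}n$ for $n\ge1$ and the single real eigenvalue $0$ on constants. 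Thus $Q_\kappa$ already has exactly one real eigenvalue, and all others stay at distance $\ge(\kappa^2-1)^{1/2}$ from $\mathbb R$. Now $T(u_1)=Q_\kappa+K+\varepsilon_0(1-\sin x)$ is a bounded perturbation of $Q_\kappa$ of norm $\le 3\varepsilon_0$; stability of root multiplicities keeps at most one real eigenvalue, and the explicit computation $T(u_1)1=\varepsilon_0>0$ pins it down as a simple positive eigenvalue. Hence $l(u_1)=1$, $l(u_0)=0$, and Lemma~\ref{2.1} applies. Your sketch lacks this change of invariant decomposition via the $\kappa\partial_x$ term, which is what isolates a single real eigenvalue at $u_1$ without disturbing the nonreal spectrum produced by $K$ at $u_0$.

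A smaller discrepancy: your dissipativity argument (energy estimate using $I+B\ge0$) is not what the paper does. The paper rewrites~\eqref{(3.1)} as $u_t=-A_1u+F_1(u)$ with $A_1=A-J\partial_x-K$ sectorial (spectrum in $\operatorname{Re}\lambda\ge 1-\varepsilon_0$) and $F_1(u)=u+f(x,u,u_x)$ globally bounded by the cut-off structure~\eqref{(3.6)}; Lemma~\ref{1.1} then gives (H3) directly, without any energy inequality.
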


Take some bounded sequence of nonzero real numbers $\varepsilon
_{n} $, $n\ge 0$, and define the linear operator~$K$
in~\eqref{(3.1)} by the relations
\begin{equation}\label{(3.4)}
 K\colon\cos nx\to \varepsilon _{n} \sin (n+1)x,\qquad
 K\colon\sin (n+1)x\to -\varepsilon _{n} \cos nx.
\end{equation}

It is easily seen that $K\in \operatorname{End}X$ and $K^{*} =-K$.
If $\varepsilon _{n} \to 0$ exponentially as $n\to \infty $, then
$K$~is a compact operator and, by an argument like that
in~\cite[Sec.~6.7]{17}, an integral operator with
$C^{\infty}$-kernel in $X=L^{2} (\Gamma)$. Further, assume that
$|\varepsilon _{n}| \le \varepsilon _{0} <1 $ for $n\ge 1$; then
$\| K\| _{\operatorname{op}} =\varepsilon _{0}$, where $\| \cdot \|
_{\operatorname{op}}$ is the norm on the operator space
$\operatorname{End}X$.

Next, fix an arbitrary $\theta \in (3/4, 1)$. Since the embeddings
$X^{\theta} \subset C^{1} (\Gamma ) \subset C(\Gamma) \subset X$
are continuous, it follows that the mapping $u\to f(x,u,u_{x} )$
and hence the nonlinear component~$F(u)$ in~\eqref{(3.4)} belong to
the class $C^{\infty} (X^{\theta} ,X)$ for an arbitrary function
$f\in C^{\infty} (\Gamma \times {\mathbb R}^{2}) $. Moreover, the
function $F\colon X^{\theta} \rightarrow X$ satisfies the Lipschitz
condition on bounded sets in~$X^{\theta}$. Thus, the evolution
equation~\eqref{(3.1)} satisfies assumptions~(H1) and~(H2) with
nonlinearity exponent~$\theta$. The dissipativity of this equation
will be ensured by choosing a special structure of the
function~$f$.

Set
\begin{equation}\label{(3.5)}
 f(x,s,p)=\kappa \omega (s) w(p)+\varepsilon _{0}\gamma(s)
 +\varepsilon _{0} \eta (s) (1-\sin x)+\mu (s)
\end{equation}
with $\kappa \in {\mathbb R}$, $| \kappa|>1$. We assume that the
functions $\omega,\gamma,\eta,\mu\in C^{\infty}({\mathbb R})$
satisfy the conditions:
\begin{equation}\label{(3.6)}
\begin{gathered}
  \omega (z)=z,\; \gamma (z)=2z^{3}-3z^{2} ,\;
  \eta(z)=2z^{2}-z^{3},\;\mu
  (z)=0, \quad|z|\le 1 ;
\\
 \omega (z)=0, \quad \gamma(z)=0,\quad
\eta(z)=0, \quad \mu (z)=-z, \quad|z|\ge 2.
\end{gathered}
\end{equation}

Note that $\partial _{x} B \partial _{x} =J \partial _{x}$ on
$X^{1/2}$ and $J \partial _{x}\in\operatorname{End}(X^{1/2},X)$.
Let us momentarily represent the right-hand side of~\eqref{(3.1)}
in the form $F_{1} (u)-A_{1} u$, where
\begin{equation*}
 A_{1} =A-J\partial _{x}
-K, \qquad D(A_{1} )=D(A)=X^{1},
\end{equation*}
and $F_{1}(u) = u+f(x,u,u_{x} )$. Since $(A-J\partial _{x} )1=1$
and
\begin{align*}
A-J\partial _{x} &\colon \cos nx\to (1+n+n^{2} )\cos nx, \\
A-J\partial _{x} &\colon \sin nx\to (1-n+n^{2} )\sin nx
\end{align*}
for $n\geq1$, it follows that the minimum eigenvalue of the
self-adjoint operator $A-J\partial _{x} $ is~$1$. Thus, the
spectrum of the nonself-adjoint operator~$A_{1}$ lies in the
half-plane $\operatorname{Re} \lambda \geq 1-\varepsilon_{0}>0$,
and $(A-A_{1} )A^{-1/2} \in \operatorname{End}X$. The latter
property implies~\cite[Theorem~1.4.8]{2} that the operator~$A_{1} $
is sectorial in~$X$ and $D(A_{1}^{\alpha})=X^{\alpha}$ for all
$\alpha\geq 0$.

Since $\left| s+f(x,s,p) \right| \le \operatorname{const}$ on
$\Gamma \times {\mathbb R}^{2}$, it follows that $F_{1}\colon
X^{1/2} \rightarrow X$ and $\left\| F_{1} (u) \right\| \le
\operatorname{const}$ on~$X^{1/2}$. The adopted construction of
$f(x,s,p)$ ensures the estimates $\left| f_{s} \right| \le
\operatorname{const}$ and $\left| f_{p} \right| \le
\operatorname{const}$ on $\Gamma \times {\mathbb R}^{2}$ and hence
the global Lipschitz condition
\begin{equation*}
\| F_{1}(u)-F_{1}(v)\| \leq L \|u-v\|_{1/2}
\end{equation*}
for $u,v\in X^{1/2}$. Since $\theta>1/2$, it follows owing to the
continuity of the embedding $X^\theta \subset X^{1/2}$ that
$\left\| F_{1} (u) \right\| \le  \operatorname{const}$ on
$X^{\theta}$ and $F_{1}\in \operatorname{Lip}(X^\theta,X)$.
Lemma~\ref{1.1} guarantees the dissipativity of Eq.~\eqref{(3.1)}
in the state space~$X^{\theta}$, so that this equation satisfies
all assumptions~(H1)--(H3) with nonlinearity exponent~$\theta$.

Now let us return to the notation of Eq.~\eqref{(3.1)} with
nonlinear component~$F(x)$ of the form~\eqref{(3.3)} and with the
structure~\eqref{(3.5)}--\eqref{(3.6)} of the function~$f(x,s,p)$.
 The
linear operator $J\partial_{x}+K$ is continuous from~$X^{1/2}$
to~$X$ and so much the more from~$X^{\theta}$ to~$X$; consequently,
$F\in \operatorname{Lip}(X^{1/2},X)$ and $F\in
\operatorname{Lip}(X^\theta,X)$.

\begin{proof}[Proof \textnormal{of Theorem~\ref{3.1}}]
The linearization $T(u)=F'(u)-A $ of the vector field $F(u)-Au$ of
Eq.~\eqref{(3.1)} at a point $u\in X^{\theta}$ is (see Sec.~2) a
closed unbounded linear operator on~$X$ with compact resolvent and
with dense domain~$X^{1}$. This operator acts on functions~$h \in
X^{1}$ by the formula
\begin{equation}\label{(3.7)}
 T(u)h=h_{xx}+Jh_{x}+f_{s}(x,u,u_{x})h+f_{p}(x,u,u_{x})h_{x}+Kh.
\end{equation}
It follows from~\eqref{(3.6)} and~\eqref{(3.7)} that
\begin{equation}\label{(3.8)}
\begin{gathered}
f(x,0,0)=0, \quad f(x,1,0)=-\varepsilon _{0} \sin x, \quad f_{s}
(x,0,0)=f_{p} (x,0,0)=0,
\\
f_{s} (x,1,0)=\varepsilon _{0} (1-\sin x), \quad f_{p}
(x,1,0)=\kappa.
\end{gathered}
\end{equation}
Since $K1=\varepsilon_{0}\sin x$, it follows that $u_{0} =0$ and
$u_{1} =1$ are time-independent solutions of Eq.~\eqref{(3.1)}. We
have
\begin{equation*}
T(u_{0} )=Q+K, \qquad T(u_{1} )=Q+K+\kappa \partial _{x}
+\varepsilon _{0} (1-\sin x),
\end{equation*}
where $ Q =\partial _{xx} +J\partial _{x}$. We also see
from~\eqref{(3.2)} that
\begin{align*}
 Q&\colon \cos nx\to -(n^{2} +n) \cos nx, \quad n\ge 0,
\\
 Q&\colon \sin nx\to -(n^{2} -n) \sin nx, \quad n\ge 1.
\end{align*}

The two-dimensional subspaces
\begin{equation*}
 X_{n} = \{\cos nx, \sin (n+1)x \}, \qquad n\ge 0,
\end{equation*}
are invariant with respect to the operators~$Q$ and~$K$. According
to~\eqref{(3.4)}, the operator~$T(u_{0})$ can be represented in
each~$X_{n}$ by the matrix
\begin{equation*}
\begin{pmatrix} {-n^{2} -n} & {-\varepsilon _{n} } \\
{\varepsilon _{n} } & {-n^{2} -n} \end{pmatrix}.
\end{equation*}
Since $\varepsilon _{n} \ne 0$ and the subspaces $X_{n} $ form an
orthogonal basis in~$X$, it follows that the spectrum
$\sigma_{0}=\sigma(T(u_{0}))$ is purely nonreal.

Next, set $Q_{\kappa } =Q+\kappa \partial _{x}$ with the same
numerical parameter~$\kappa$ as in~\eqref{(3.5)}. By using the
results in~\cite{2} again, we find that the operator~$Q_{\kappa}$
is closed with domain~$X^{1}$ and has compact resolvent. In
addition, $Q_{\kappa}$ leaves invariant the subspaces
\begin{equation*}
Y_{n} = \{ \cos nx, \sin nx  \},\qquad  n\ge 1,
\end{equation*}
in~$X$ and is represented on each of these subspaces by the matrix
\begin{equation*}
\begin{pmatrix} {-n^{2} -n} & {\kappa n} \\ {-\kappa n} &
{-n^{2} +n} \end{pmatrix},
\end{equation*}
whose eigenvalues have the form $\lambda _{n} =-n^{2} \pm idn$ with
$d=(\kappa ^{2} -1)^{1/2} >0$. Since $Q_{\kappa } 1=0$, it follows
that the spectrum $\sigma(Q_{\kappa})$ consists of the point~$0$
and some complex numbers that are distant at least by~$d$ from the
real axis. The norm of the operator of multiplication by the
function $\xi (x)=1-\sin x$ in~$X=L^{2} (\Gamma )$ coincides with
the $\sup$-norm of this function, i.e., is equal to~$2$. Since
$\left\| K \right\| _{\operatorname{op}} =\varepsilon _{0} $, we
see that $\left\| K+\varepsilon _{0} \xi \right\|
_{\operatorname{op}} \le 3\varepsilon _{0} $. Now let us
treat~$T(u_{1} )$ as a bounded perturbation of the closed linear
operator $Q_{\kappa}$ with discrete spectrum. Using the stability
of root multiplicities \cite[Theorem~4.3]{18}, we can conclude
that, for sufficiently small $\varepsilon_{0}$, the spectrum
$\sigma_{1}=\sigma (T(u_{1}))$ contains at most one real eigenvalue
(counting algebraic multiplicities). It follows from
relations~\eqref{(3.7)} and~\eqref{(3.8)} that $T(u_{1}
)1=\varepsilon _{0} >0$; hence there indeed exists a (positive)
real eigenvalue.

Thus, for the given choice of the function $f(x,s,p)$ of the
form~\eqref{(3.5)}, \eqref{(3.6)} and the integral operator~$K$,
the spectra~$\sigma _{0}$ and~$\sigma_{1}$ of the
linearization~$T(u)$ of the vector field $F(u)-Au$ of
Eq.~\eqref{(3.1)} at the stationary points $u_{0}, u_{1}\in
X^{\theta}$ have the following properties: $\sigma _{0} \bigcap
{\mathbb R}=\phi $ and $\sigma_{1} \bigcap {\mathbb R}=\{
\varepsilon _{0} \} $, where $\varepsilon _{0}$ is the simple
positive eigenvalue of~$T(u_{1} )$. We see that the stationary
points~$u_{0}$ and~$u_{1}$ prove to be hyperbolic. If we denote the
number (counting multiplicities) of positive eigenvalues in the
spectrum of~$T(u)$, $u\in X^{\theta}$, by~$l(u)$, then $l(u_{0}
)=0$ and $l(u_{1} )=1$. Thus, the attractor of the semilinear
parabolic equation~\eqref{(3.1)} is not contained in any smooth
invariant finite-dimensional manifold ${\EuScript M}\subset
X^{\theta} $ by Lemma~\ref{2.1}, and the proof of Theorem~\ref{3.1}
is complete.
\end{proof}

The claim in Theorem~3.1 on the nonexistence of a smooth inertial
manifold for~\eqref{(3.1)} for an appropriate choice of the
nonlinear function $f(x,u,u_{x})$ remains valid if one replaces the
state space $X^{\theta}$, $\theta\in(3/4,1)$, with the more natural
space $X^{1/2}=H^{1}$ provided that one uses the weakened
version~\cite[p.~813]{10}  of the notion of differentiability of
nonlinear mappings (see also \cite[Definition~1.1]{12}).

\end{document}